\newcommand{\excise}[1]{}
\newtheorem{thm}{Theorem}[section]
\newtheorem{lemma}[thm]{Lemma}
\newtheorem{Warn}[thm]{Caution}
\def\RP{\mathbb R \mathbb P}
\def\rr{\mathbb R}
\def\De{\Delta}
\def\ssu{\subset}
\def\<{\langle}
\def\>{\rangle}
\def\0{{\mathbf 0}}
\def\.{\hskip.06cm}
\def\ts{\hskip.03cm}
\begin{document}
\title{The discrete square peg problem}

\author[Igor~Pak]{ \ Igor~Pak$^\star$}
\date{April 3, 2008}


\thanks{\thinspace ${\hspace{-.45ex}}^\star$School of Mathematics,
University of Minnesota, Minneapolis, MN, 55455. \.
Email: \ts \texttt{pak@umn.edu}}

\maketitle


\vskip1.3cm

\begin{abstract}
The square peg problem asks whether every Jordan curve in the plane
has four points which form a square.  The problem has been resolved
(positively) for various classes of curves, but remains open in full
generality.  We present two new direct proofs for the case of
piecewise linear curves.
\end{abstract}

\vskip2.5cm

\section*{Introduction}\label{intro}

\noindent
The \emph{square peg problem} is beautiful and deceptively simple.
It asks whether every Jordan curve $C \ssu \rr^2$
has four points which form a square.  We call such
squares \emph{inscribed} into~$C$ (see Figure~\ref{f:inscribe-babochka}).

\begin{figure}[hbt]
\psfrag{C}{$C$}
\begin{center}
\epsfig{file=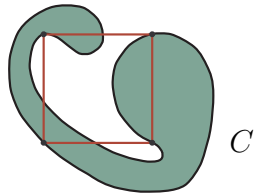,width=3.2cm}
\end{center}
\caption{Jordan curve~$C$ and an inscribed square. }
\label{f:inscribe-babochka}
\end{figure}

The problem goes back to Toeplitz (1911), and over almost a century has
been repeatedly rediscovered and investigated, but never completely resolved.
By now it has been established for convex curves and curves with various
regularity conditions, including the case of piecewise linear curves.
While there are several simple and elegant proofs of the convex case,
the piecewise linear case is usually obtained as a consequence of
results proved by rather technical topological and analytic arguments.
In fact, until to this paper, there was no direct elementary proof.
Here we present two such proofs in the piecewise linear case.

\medskip

\noindent
{\bf Main Theorem.} \,
\emph{Every simple polygon on a plane has an inscribed square.
}

\medskip

As the reader will see, both proofs are direct and elementary, although perhaps
not to the extend one would call them ``book proofs''.  The proofs are strongly
motivated by the classical ideas in the field (see Section~\ref{s:history}).
Here and there, we omit a number of minor straightforward details,
in particular the deformation construction at the end of the second proof.

The rest of the paper is structured as follows.  In the next two sections we
present the proofs of the main theorem.  These proofs are completely separate
and can be read independently.  In the last section we give an outline of the
rich history of the problem and the underlying ideas.  The historical part
is not meant to be comprehensive, but we do include a number of pointers
to surveys and recent references.

\bigskip

\section{Proof via inscribed triangles} \label{s:triangles}

\noindent
Let $X=[x_1\ldots x_n] \in \rr^2$ be a simple polygon.  We assume
that~$X$ is generic in a sense which will be clear later on.
Further, we assume that the angles of~$X$ are \emph{obtuse},
i.e.~lie between $\pi/2$ and~$3\ts \pi/2$.  Fix a clockwise
orientation on~$X$.

For an ordered pair $(y,z)$ of points $y,z \in X$ denote by~$u$
and~$v$ the other two vertices of a square $[zyuv]$ in the plane,
with vertices on~$X$ in this order, as shown in
Figure~\ref{f:inscribe-sqjerrard}.
Parameterize~$X$ by the length and think of $(y,z)$ as a point on a torus
$T = X\times X$.  Denote by $U \ssu T$ the subset of pairs~$(y,z)$
so that $u \in X$.  Similarly, denote by $V \ssu T$ the subset of
pairs~$(y,z)$ so that~$v \in X$.  Our goal is to show that~$U$
intersects~$V$.

\begin{figure}[hbt]
\psfrag{y}{\small $y$}
\psfrag{z}{\small $z$}
\psfrag{u}{\small $u$}
\psfrag{v}{\small $v$}
\psfrag{X}{\small $X$}
\psfrag{X'}{\small $X'$}
\begin{center}
\epsfig{file=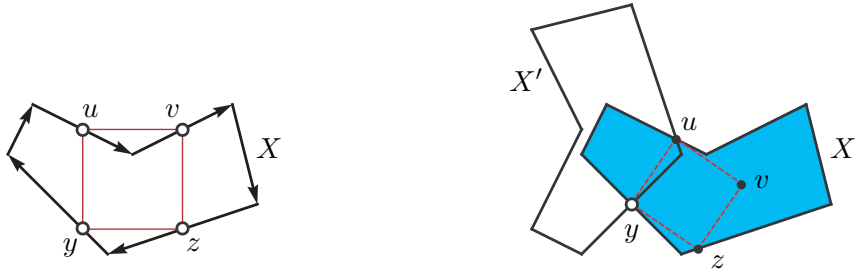,width=11.5cm}
\end{center}
\caption{Square $[zyuv]$ inscribed into~$X$ and a rotation~$X'$ of~$X$ around~$y$. }
\label{f:inscribe-sqjerrard}
\end{figure}

First, observe that for a generic~$X$ the set of points
$U_y = \{z\ts: \ts (y,z) \in U\}$ is finite.
Indeed, these points~$z \in U_y$ lie in the intersection of the
polygon~$X$ and a polygon~$X'$ obtained by a counterclockwise
rotation of~$X$ around~$y$ by an angle~$\pi/2$
(see Figure~\ref{f:inscribe-sqjerrard}).
Therefore, if~$X$ does not have orthogonal edges there is
only a finite number of points in $X \cap X'$.  Moreover, when~$y$
moves along~$X$ at a constant speed, these intersection points~$z$
change piecewise linearly, which implies that~$U$ is also piecewise
linear.

Let us show that~$U$ is a disjoint union of simple polygons.  Observe
that when~$y$ is moved along~$X$ the intersection point $z \in X \cap X'$
cannot disappear except when a vertex of~$X$ passes through an edge of~$X'$,
or when a vertex of~$X'$ passes through an edge of~$X$.
This implies that when~$y$ is moved along~$X$ the intersection points
emerge and disappear in pairs, and thus~$U$ is a union of polygons.
Note that for a generic~$X$, at no time can a vertex pass through
a vertex, which is equivalent to the condition that no square with a
diagonal~$(x_i,x_j)$ can have a point $y \in X$ as its third vertex.

To see that the polygons in~$U$ are simple and disjoint, observe that the
only way we can have an intersection if a vertex of~$X'$ changes direction
at an edge in~$X$, or, similarly, if a vertex of~$X$ changes direction
at an edge in~$X'$.  This is possible only when~$y$ and either~$z$ or~$u$
are vertices of~$X$.  Since~$X$ is chosen to be generic we can assume
this does not happen, i.e.~that $X$ does not have an inscribed right
isosceles triangle with an edge~$(x_i,x_j)$.

\begin{figure}[hbt]
\psfrag{X}{\footnotesize $X$}
\psfrag{X'}{\footnotesize $X'$}
\begin{center}
\epsfig{file=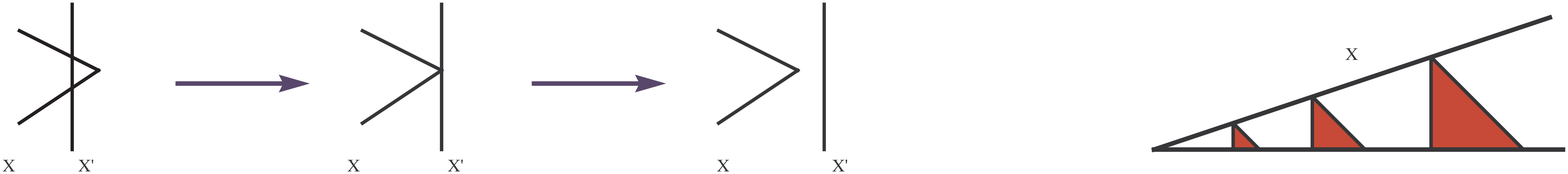,width=13.2cm}
\end{center}
\caption{Two disappearing points in $X \cap X'$ and a converging family
of right isosceles triangles inscribed into~$X$ with angles $<\pi/2$. }
\label{f:inscribe-sq-disappear}
\end{figure}

A similar argument also implies that on a torus~$T$, the set~$U$
separates points $(y,z) \in T$ with the corresponding vertex~$u$
inside of~$X$, from those where~$u$ is outside.  By continuity,
it suffices to show that the point~$u$ crosses the edge of~$X$
as the generic point $(y,z)$ crosses~$U$.  Consider a point
$(y,z) \in U$ such that the corresponding third vertex of a
square~$u$ lies in the relative interior of an edge~$e$ in~$X$.
Now fix~$y$ and change~$z$.  Since~$X$ is generic, point~$u$
will pass through the edge~$e$, which implies the claim.

We need a few more observations on the structure of~$U$.  First, observe
that~$U$ does not intersect the diagonal $\De = \{(y,y), y \in X\}$.
Indeed, otherwise we would have a sequence of inscribed right triangles
$(y,z,u)$ converging to the same point, which is impossible since~$X$
does not have angles between $\pi/2$ and $3\ts\pi/2$
(see Figure~\ref{f:inscribe-sq-disappear}).
In a different direction, observe that for a generic~$y$ the number
of points in~$U_y$ is odd.  This follows from the previous argument
and the fact the number of intersections of~$X$ and~$X'$ as even
except at a finite number of points~$y$.

Now, from above we can conclude that at least one of the polygons in~$U$
is not null homotopic on the torus~$T$, since otherwise for a generic
point~$y$ the size of~$U_y$ is even.  Fix one such polygon and denote
it by~$U^\circ$. Since~$U^\circ$ is simple, not null homotopic and does
not intersect the diagonal~$\De$, we conclude that~$U^\circ$ is homotopic
to~$\De$ on~$Y$.  Therefore, there exist a continuous family of inscribed
right isosceles triangles $(uyz)$ such that when~$y$ goes around~$X$ so
do~$z$ and~$u$.  Relabeling triangles $(uyz)$ with $(yzv)$ we obtain
a simple polygon $V^\circ \ssu V$ which is also homotopic to~$\De$ on~$T$.

\begin{figure}[hbt]
\psfrag{1}{\small $A$}
\psfrag{2}{\small $B$}
\psfrag{3}{\small $C$}
\psfrag{V}{\small $V^\circ$}
\psfrag{W}{\small $U$}
\psfrag{U}{\small $U^\circ$}
\psfrag{T}{\, \small $T$}
\psfrag{D}{\small $\De$}
\begin{center}
\epsfig{file=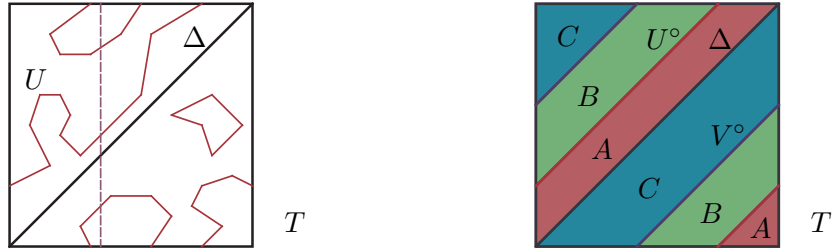,width=10.8cm}
\end{center}
\caption{Set~$U$ on a torus~$T$ and the sequence of regions $A,B,C \ssu T$. }
\label{f:inscribe-jr-regions}
\end{figure}

Suppose now that~$U^\circ$ and~$V^\circ$ do not intersect.  Together
with~$\De$ these curves separate the torus~$T$ into three regions:
region~$A$ between $\De$ and~$U^\circ$, region~$B$ between~$U^\circ$ and
$V^\circ$, and region~$C$ between $V^\circ$ and~$\De$
(see Figure~\ref{f:inscribe-jr-regions}).  Consider the pairs $(y,z)$
in the regions~$A$ and~$C$ which lie close to~$\De$ (i.e.~$y$ and~$z$ lie
close to each other on~$X$).
Clearly, for such $(y,z)$, either both corresponding points~$u$ and~$v$
lie \emph{inside}~$X$ or both~$u$ and~$v$ lie \emph{outside} of~$X$.
Let~$A$ be the former and let~$C$ be the latter regions.
From above, for all $(y,z) \in B$ we have $u \notin X$ and $v \in X$.
In other words, when~$y$ is fixed and~$z$ is moved along~$X$
counterclockwise starting at~$y$, of the points~$u$ and~$v$ the first
to move outside of~$X$ is always~$u$.

Now, consider the smallest right equilateral triangle~$R$ inscribed
into~$X$ (the existence was shown earlier).   There are two ways to
label it as shown in Figure~\ref{f:inscribe-jr-smallest}.  For the
first labeling, if~$y$ is fixed and~$z$ is moved as above, the first
time point~$u$ lies on~$X$ is when~$z$ and~$u$ are vertices of~$R$.
By assumptions on region $A \ssu T$, the corresponding point~$v$ lies
inside~$X$.  Similarly, for the
second labeling, if~$y$ is fixed and~$z$ is moved as above, the first
time point~$v$ lies on~$X$ is when~$z$ and~$v$ are vertices of~$R$.
By assumptions on regions $A \ssu T$, the corresponding point~$u$ lies
outside of~$X$, a contradiction.\footnote{The figure is somewhat misleading
as it gives the impression that for all $y$ and~$z$ with~$|yz|$ smaller than
that in~$R$,
we must have $(y,z) \in A$.  In fact, we can have all these pairs in~$C$
and the same argument will work when~$A$ is substituted with~$C$ and the
inside/outside properties are switched accordingly.  The point is, by
continuity, all close~$(y,z)$ with a fixed order on~$X$ determined by~$R$,
must lie in the same region (either~$A$ or~$C$).}

\begin{figure}[hbt]
\psfrag{y}{\small $y$}
\psfrag{z}{\small $z$}
\psfrag{u}{\small $u$}
\psfrag{v}{\small $v$}
\psfrag{X}{\small $X$}
\psfrag{R}{\small $R$}
\begin{center}
\epsfig{file=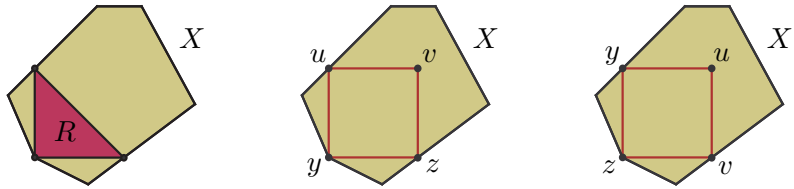,width=10.5cm}
\end{center}
\caption{The smallest inscribed right isosceles triangle and its two labelings. }
\label{f:inscribe-jr-smallest}
\end{figure}

Finally, suppose~$X$ is not generic.  We can
perturb the vertices of~$X$ to obtain a continuous family of generic
polygons converging to~$X$ and use the limit argument.  Since~$X$ is
simple, the converging squares do not disappear and converge to a
desired inscribed square.  Similarly, when~$X$ has angles~$<\pi/2$ or
$>3\pi/2$, use the limit argument by cutting the corners as shown
in Figure~\ref{f:inscribe-jr-corner}.

\begin{figure}[hbt]
\begin{center}
\epsfig{file=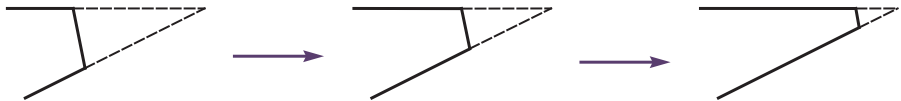,width=12.1cm}
\end{center}
\caption{A converging family of obtuse polygons. }
\label{f:inscribe-jr-corner}
\end{figure}

\bigskip

\section{Proof by deformation}\label{s:deform}

\noindent
In this section, we prove the following extension of the main theorem:
every generic simple polygon has an odd number of inscribed squares.
Now that we have the relation, we can try to prove that it is invariant
under certain elementary transitions.

\begin{thm} \label{t:deform-square}
Every generic simple polygon has an odd number of inscribed
squares.\footnote{It takes some effort to clarify what we mean by a generic
(see the proof).  For now, the reader can read this as saying that the
$n$-gons, viewed as points in~$\rr^{2n}$, are almost surely generic.}
\end{thm}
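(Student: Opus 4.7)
The plan is to establish the parity count as a deformation invariant: show that the number of inscribed squares of a generic simple polygon changes only by even increments along any generic one-parameter deformation, and then reduce to a reference polygon whose inscribed squares can be listed by hand. This is a cobordism-style argument in the spirit of many classical mod-$2$ counts.

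I would adopt the parametrization from Section~\ref{s:triangles}: for an ordered pair $(y,z)\in T = X\times X$ let $u$ and $v$ be the remaining two vertices of the square $[zyuv]$, and define $U,V \ssu T$ as the loci where $u$, respectively $v$, lies on $X$. The inscribed squares of $X$ correspond to the transverse intersection points of $U$ and $V$, modulo the cyclic $\zz/4$ symmetry of the labeling. I would then declare $X$ \emph{generic} when $U$ and $V$ are piecewise-linear $1$-manifolds intersecting transversely, avoiding the diagonal $\De \ssu T$, and with no inscribed square having a vertex at a vertex of $X$. Each such condition cuts out a semialgebraic subset of $\rr^{2n}$ of positive codimension, so the generic polygons form an open dense subset of the space of simple $n$-gons and the number $|S(X)|$ of inscribed squares is finite on this subset.

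The core of the argument is to follow $U\cap V$ along a generic path $\{X_t\}$ of simple polygons from $X_0 = X$ to a chosen reference polygon~$X_1$, transverse to the stratified codimension-one subvariety of non-generic polygons. The total space $\mathcal{Z} = \{(t, y, z) : (y,z) \in U(X_t) \cap V(X_t)\}$ is then a piecewise-linear $1$-manifold whose fiber over a regular~$t$ is exactly the set of inscribed squares of $X_t$. I would enumerate the local events that can alter the fiber along the path: a Morse-style fold where two intersection points of $U(X_t)$ and $V(X_t)$ merge and annihilate; a tangential crossing where $U$ becomes tangent to $V$; a vertex of an inscribed square crossing a vertex of $X_t$; and a combinatorial reorganization of $U$ and $V$ coming from a change in $X_t \cap X_t'$. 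The first two events change $|S(X_t)|$ by $\pm 2$ and so preserve parity, while the last is handled by the pairwise emergence/annihilation principle already established in Section~\ref{s:triangles} for the components of $U$ and $V$.

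The main obstacle, and the step requiring the most care, is the vertex-crossing event: when a vertex of $[zyuv]$ passes through a vertex $x_k$ of $X_t$, I need to verify that the crossing preserves parity rather than creating or destroying a single square. Here I would linearize locally---near such a crossing, the condition that the offending square vertex lies on $X_t$ is defined by two distinct edge equations meeting at $x_k$, so the square trajectory in $\mathcal{Z}$ continues from one branch of $U$ (or $V$) to the adjacent branch, and a careful sign count shows that the intersection number with $V$ (or $U$) is unchanged modulo~$2$. With this local model in hand, the reduction concludes by deforming $X$ to a thin acute triangle (with extra colinear vertices inserted to match the $n$-gon combinatorics), which has exactly three inscribed squares---one sitting with a pair of vertices on each edge---and $3$ is odd.
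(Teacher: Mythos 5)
Your overall strategy---parity of the count as a deformation invariant, plus reduction to a reference polygon whose squares can be listed---is exactly the architecture of the paper's second proof, and your torus formulation ($U\cap V$ modulo the $\zz/4$ relabelling) is a legitimate repackaging of it. But the two places where you defer to "a careful sign count" and to the terminal count are precisely where the content lives, and both have genuine gaps. The vertex-crossing event is not disposed of by your local model. You assert that when a square vertex reaches a vertex $x_j$ of $X_t$ the trajectory in $\mathcal{Z}$ ``continues from one branch of $U$ to the adjacent branch.'' That is only one of two cases. The paper's analysis replaces the line of the edge $e_1$ carrying the offending vertex by the line of the adjacent edge $e_1'$ and follows the unique square $B_t$ inscribed in the four lines through $e_1',e_2,e_3,e_4$ (via Lemma~\ref{l:deform-square-lines}); everything hinges on whether, as $t\to s$ with $t<s$, the relevant vertex of $B_t$ approaches $x_j$ from inside $e_1'$ or from outside. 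In the outside case your picture is correct: one square dies on $e_1$, one is born on $e_1'$, net change $0$. In the inside case the trajectory does \emph{not} continue: the square on $e_1$ and the square on $e_1'$ coexist for $t<s$ and annihilate in a pair at $t=s$, a net change of $-2$. Both preserve parity, but the second case is exactly what your sign count must produce, and your stated local model (continuation to the adjacent branch) would instead predict a spurious surviving square there.

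The endgame is also problematic. Each of the three inscribed squares of an acute triangle has two vertices on a single side, hence on a single line; inserting collinear subdivision points does not change the fact that two of the four supporting lines coincide, so these squares are precisely the degenerate configurations for which the square-through-four-lines correspondence is not in general position, and the transversality and nondegeneracy your count relies on must be argued separately rather than invoked. The paper avoids this entirely: it subdivides edges so finely that no inscribed square ever has two vertices on one edge at any time during the deformation, carries out the deformation by triangulating and collapsing triangles onto their diagonals (which also \emph{constructs} the required path through simple polygons, something your ``generic path to a chosen reference polygon'' takes for granted), and terminates at a polygon squeezed onto an interval with a \emph{single} inscribed square. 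If you insist on ending at a thin acute triangle you must both verify that its three squares are isolated transverse points of $U\cap V$ despite the coincident lines, and show that a path to it can be chosen meeting your discriminant only in the events you enumerated; neither is automatic.
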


The main theorem now follows by a straightforward limit
argument.  Note also that the theorem is false for \emph{all} simple polygons;
for example every right triangle has exactly two inscribed squares.
We begin the proof with the following simple statement.

\begin{lemma} \label{l:deform-square-lines}
Let $\ell_1,\ell_2,\ell_3$ and~$\ell_4$ be four lines in~$\rr^2$ in
general position.  Then there exists a unique square $A=[a_1a_2a_3a_4]$
such that $x_i \in \ell_i$ and~$A$ is oriented clockwise. \ts
Moreover, the map $\ts (\ell_1,\ell_2,\ell_3,\ell_4) \to (a_1,a_2,a_3,a_4)\ts $
is continuously differentiable, where defined.\footnote{To make this precise,
think of lines~$\ell_i$ as points in $\RP^2$.}
\end{lemma}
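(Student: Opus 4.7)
The plan is to convert the four incidence conditions $a_i \in \ell_i$ into a $4\times 4$ linear system and show that it is nondegenerate for lines in general position. Parameterize any clockwise square by its center $c \in \rr^2$ and the vector $d \in \rr^2$ from $c$ to $a_1$; then
\[
a_1 = c+d,\quad a_2 = c+Rd,\quad a_3 = c-d,\quad a_4 = c-Rd,
\]
where $R$ is the clockwise quarter-turn. Writing $\ell_i = \{x : n_i\cdot x = b_i\}$, the four conditions $a_i \in \ell_i$ become four linear equations in the four coordinates of $(c,d)$; the coefficient matrix $M = M(n_1,\dots,n_4)$ depends only on the normals, and the right-hand side is linear in $(b_1,\dots,b_4)$. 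Existence and uniqueness of $A$ therefore both reduce to the invertibility of $M$.

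The crux is to show $\det M \ne 0$ in general position. Since $\det M$ is a polynomial in the entries of the normals, it is either identically zero or nonzero on a Zariski-open dense subset of the configuration space. To rule out the former it suffices to exhibit one configuration in which $\det M \ne 0$; a direct calculation with, for instance, $n_1=(1,0)$, $n_2=(1,1)$, $n_3=(0,1)$, $n_4=(-1,1)$, does this. We then bake $\det M \ne 0$ into the working definition of ``general position.'' On this open set, continuous differentiability of the map $(n_i,b_i) \mapsto (c,d) \mapsto (a_1,\dots,a_4)$ is immediate from Cramer's rule, the solution being a rational function of the data with nonvanishing denominator; lines viewed as points of $\RP^2$ are handled by passing to standard affine charts.

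The main obstacle is isolating this nondegeneracy, since everything else is routine linear algebra and calculus. A parallel geometric viewpoint confirms it concretely: fix $a_1 \in \ell_1$, apply the $-\pi/2$ rotation $\rho_{a_1}$ about $a_1$, and observe that $a_4 = \ell_4 \cap \rho_{a_1}(\ell_2)$ is a unique point so long as $Rv_2$ is not parallel to the direction $v_4$ of $\ell_4$; then $a_2$ and $a_3 = a_2 + R(a_2-a_1)$ are determined. As $a_1$ slides along $\ell_1$, the point $a_3$ traces an affine line, which meets $\ell_3$ in a unique point provided these two lines are not parallel. These two transversality conditions are equivalent to $\det M \ne 0$ and give the desired geometric description of ``general position.''
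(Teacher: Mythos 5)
Your proposal is correct, but your primary argument takes a genuinely different route from the paper. The paper's proof is the geometric construction you relegate to your final paragraph: fix $z_1\in\ell_1$, rotate $\ell_4$ by $\pi/2$ about $z_1$ and intersect with $\ell_2$ to produce a clockwise right isosceles triangle, determine the fourth vertex, and observe that as $z_1$ slides along $\ell_1$ this fourth vertex traces a line $\ell_3'$, which meets $\ell_3$ in a unique point; both existence and uniqueness, and the smooth dependence, are read off from this construction. Your main argument instead parameterizes clockwise squares by $(c,d)\in\rr^4$, turns the four incidences into a linear system $M(c,d)^T=b$, and reduces everything to $\det M\ne 0$, verified to hold generically by a single explicit example. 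What your approach buys: the $C^1$ (indeed rational) dependence on the data is completely transparent via Cramer's rule, and the degeneracy locus is captured by one explicit polynomial condition, which is cleaner than the paper's two sequential transversality assumptions ($\ell_2\not\perp\ell_4$, then $\ell_3\not\parallel\ell_3'$); your last paragraph correctly identifies these as equivalent to $\det M\ne0$. What the paper's approction buys is reuse: the ``rotate by $\pi/2$ and intersect'' device is the same one driving Section~\ref{s:triangles}. Two small points worth making explicit in your write-up: the solution with $d=0$ must be excluded (it would force all four lines through the common point $c$, impossible in general position, so the unique solution is a genuine square); and, like the paper, you fold the nondegeneracy condition into the meaning of ``general position,'' which is consistent with the lemma's ``where defined'' caveat.
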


\begin{proof}  Fix $z_1 \in \ell_1$.  Rotate~$\ell_4$ around~$z_1$
by~$\pi/2$, and denote by~$\ell_4'$ the resulting line, and by
$z_2 = \ell_2\cap \ell_4'$ the intersection point.  Except when
$\ell_2 \bot \ell_4$, such~$z_2$ is unique.  Denote by~$z_4 \in \ell_4$
the inverse rotation of~$z_2$ around~$z_1$.  We obtain the right isosceles
triangle $\De=(z_2z_1z_4)$ oriented clockwise in the plane.
The fourth vertex~$z_3$ of a square is uniquely determined.
Start moving~$z_1$ along~$\ell_1$ and observe that the locus of~$z_3$
is a line, which we denote by~$\ell_3'$.  Since line~$\ell_3$ is in
general position with respect to~$\ell'_3$, these two line intersect
at a unique point~$x_3$, i.e.~determines uniquely the
square~$[a_1a_2a_3a_4]$ as in the theorem. \. The second part
follows from immediately from the above construction.
\end{proof}

\medskip

\begin{proof}[Sketch of proof of Theorem~\ref{t:deform-square}]
We begin with the following restatement of the second part of the lemma.
Let $X=[x_1\ldots x_n]$ be a generic simple polygon and let
$\{X_t, t\in [0,1]\}$  be its continuous piecewise linear deformation.
Suppose $A=[a_1a_2a_3a_4]$ is an inscribed square with vertices~$a_i$
at different edges of~$X$, and none at the vertices of~$X$,
i.e.~$a_i \ne x_j$.  Then, for sufficiently small~$t$, there
exists a continuous deformation $\{A_t\}$ of inscribed squares,
i.e.~squares $A_t$ inscribed into~$X_t$.  Moreover, for
sufficiently small~$t$, the vertices $a_i$ of~$A_t$ move
monotonically along the edges of~$X_t$.

Consider what can happen to inscribed squares~$A_t$ as~$t$ increases.
First, we may have some non-generic polygon~$X_s$, where such square
in non-unique or undefined.  Note that the latter case is impossible,
since by compactness we can always define a limiting square~$A_s$.
If the piecewise linear deformation~$\{X_t\}$
is chosen generically, it is linear at time~$s$, and
we can extend the deformation of~$A_t$ beyond~$A_s$.

The second obstacle is more delicate and occurs when the vertex~$a_i$
of square~$A_s$ is at a vertex~$x_j$ of~$X_s$. Clearly, we can no
longer deform~$A_s$ beyond this point.  Denote by~$e_1$ the edge of~$X$
which contains vertices~$a_i$ of $A_t$ for $t<s$.  Clearly,
$e_1=(x_{j-1},x_{j})$ or $e_1=(x_j,x_{j+1})$.  Denote by~$e_1'$ the
other edge adjacent to~$v$.  Denote by $e_2,e_3$ and~$e_4$ the other
three edges of~$X$ containing vertices of~$A_t$ (see Figure~\ref{f:deform-sq-move}).

\begin{figure}[hbt]
\psfrag{A}{\small $A_t$}
\psfrag{B}{\small $A_s$}
\psfrag{C}{\small $B_r$}
\psfrag{x}{\footnotesize $x_j$}
\psfrag{e}{\footnotesize $e_1$}
\psfrag{e'}{\footnotesize $e_1'$}
\psfrag{e2}{\footnotesize $e_2$}
\psfrag{e3}{\footnotesize $e_3$}
\psfrag{e4}{\footnotesize $e_4$}
\begin{center}
\epsfig{file=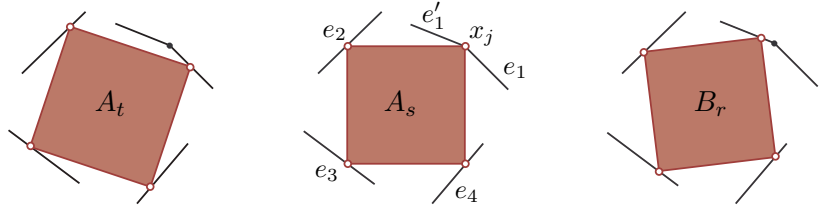,width=10.9cm}
\end{center}
\caption{Inscribed squares $A_t$, $A_s=B_s$ and $B_r$, where
$t < s <r$. Here~$e_2,e_3$ and~$e_4$ are fixed,
while~$e_1$ and~$e_1'$ move away from the squares.}
\label{f:deform-sq-move}
\end{figure}

Now consider a family $\{B_t\}$ of squares inscribed into lines
spanned by edges~$e_1',e_2,e_3$ and~$e_4$. By construction, $A_s = B_s$.
There are two possibilities: either the corresponding vertex~$b_i$
approaches~$x_j$ from inside~$e_1'$ or from the outside, when $t \to s$ and~$t <s$.
In the former case, we conclude that the number of inscribed squares
decreases by~2 as~$t$ passes through~$s$.  In the latter case, one square
appears and one disappears, so the parity of the number of squares remains
the same.  In summary, the parity of the number of squares inscribed
into~$X_t$ with vertices at different edges is invariant under the
deformation.

It remains to show that one can always deform the polygon~$X$ in such
a way that at no point in the deformation do there exist inscribed
squares with more than one vertex at the same edge, and such that the
resulting polygon has an odd number of inscribed squares.

Fix a triangulation $T$ of~$X$.  Find a triangle~$\De$
in~$T$ with two edges the edges of~$X$ and one edge a diagonal in~$X$.
Subdivide the edges of~$X$ into small edges, so that neither of the new
vertices is a vertex of an inscribed square.  If the edge length is now
small enough, we can guarantee that no square with two vertices at the
same edge is inscribed into~$X$.  Now move the edges along two sides
of the triangle~$\De$ toward the diagonal as shown in
Figure~\ref{f:deform-square}.  Repeat the procedure.  At the
end we obtain a polygon~$Z$ with edges close to an interval.  Observe
that~$Z$ has a unique inscribed square
(see Figure~\ref{f:deform-square}).  This finishes the proof.
\end{proof}

\begin{figure}[hbt]
\psfrag{X}{\small $X$}
\psfrag{Z}{\small $Z$}
\psfrag{T}{\small $\De$}
\begin{center}
\epsfig{file=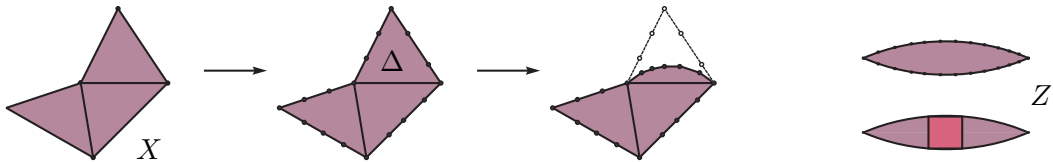,width=13.96cm}
\end{center}
\caption{The first step of the polygon deformation which preserves the parity of the
number of inscribed squares; the final polygon~$Z$. }
\label{f:deform-square}
\end{figure}

\bigskip

\section{The history, the proof ideas and the final remarks} \label{s:history}

\subsection{} \.
The square peg problem of inscribed squares has a long and interesting history.
It seems, every few years someone new falls in love with it and works very
hard to obtain a new variation on the problem.  Unfortunately, as the results
become stronger, the solutions become more technically involved and several of
them start to include some gaps, still awaiting careful scrutiny.\footnote{While
we did find some such rather unconvincing arguments, we refrain from commenting on
them and leave them to the experts.}
Interestingly, the impression one gets from the literature is that that no direct
elementary proof is even possible in the piecewise linear case, as the problem
is difficult indeed, the existing techniques are inherently non-discrete and,
presumably, other people have tried.

\subsection{} \.
We begin with the celebrated incorrect proof by Ogilvy~\cite{FO}.   While the
proof was refuted by several readers within a few months after its publication,
it is still worth going over this proof and try find the gaps (there are three
major ones, even if one assumes that the curve is piecewise linear or analytic).
As reported in~\cite{KW}, Ogilvy later became disillusioned in the possibility
of a positive resolution of the problem.

\subsection{} \.
The first major result was proved by Emch, who established the square peg
problem for convex curves~\cite{Emch}.  Later, Emch writes in~\cite{Emch-medians}
that Toeplitz and his students discovered the result independently two years
earlier, in~1911, but never published the proof. We refer to~\cite[p.~84]{Gru-cbsm}
for further references to proofs in the convex case).  Emch starts by constructing
a family of inscribed rhombi with a diagonal parallel to a given line.  By rotating
the line and using uniqueness of such rhombi he concludes that one can
continuously rotate a rhombus into itself with two diagonals interchanged.
Then the intermediate value theorem implies that at some point the rhombus
has equal diagonals, thus giving a square.

In the largely forgotten followup paper~\cite{Hebbert}, Hebbert studies the
squares inscribed into quadrilaterals, essentially proving
Lemma~\ref{l:deform-square-lines}.  He stops short of applying his
observations to general simple polygons.
Let us mention also that in the second proof we use the fact that
every non-convex polygon can be triangulated.  This is a standard
result also due to Emch~\cite{Emch-medians}.

\subsection{} \.
An important breakthrough was made by Shnirelman in~1929, when he offered a
solution for curves with piecewise continuous curvature.  This paper was
published in an obscure Russian publication, but later an expanded
version~\cite{Shn} was published posthumously.  Guggenheimer in~\cite{Gugg-sh}
studied this proof, added and correct several technical points, and concluded
that for Shnirelman's proof to work the curve needs to have a bounded variation.
Shnirelman noted that for a generic curve the parity of the number of inscribed
squares must be invariant as the curve is deformed.  The proof uses a local
lemma on existence of inscribed square for closed curves, a non-linear version
of Hebbert's observation (and, most likely, completely independent).  For the
connectivity of curves with continuous curvature and bounded variation
Shnirelman and Guggenheimer use known advanced results in the field.  Finally,
the fact that every ellipse with unequal axis has a unique inscribed square
is straightforward.

Our proof in Section~\ref{s:deform} is modeled on the deformation idea of Shnirelman
(we were unaware of Hebbert's paper).  In the piecewise linear case we no longer have
the analytic difficulties, but we do get the unpleasant obstacle of having inscribed
squares with more than two vertices on the same edge.  In fact, if not for the smooth
case, there is no intuitive reason behind Theorem~\ref{t:deform-square}.

Interestingly, we believe we know where Shnirelman got the idea of his proof.
At the time of his first publication, Shnirelman was working with Lyusternik
on the conjecture of Poincar{\'e} which states that every smooth convex surface
has at least three closed geodesics.  This conjecture was made in the
foundational paper~\cite{P}, where Poincar{\'e} proves that at least one
such closed geodesics exists (on analytic surfaces), and this proof uses
a similar deformation and parity argument.

\subsection{} \.
In 1961, Jerrard rediscovered the square peg problem and proved it for
analytic curves.  He was apparently motivated by the Kakutani's theorem
that every convex body has a circumscribed cube.  This result itself
followed a series of earlier similar results (see e.g.~\cite{Struik})
and was later extended by Dyson, Floyd, and others.

Jerrard's proof was a model of our proof in Section~\ref{s:triangles}.
He similarly considers a curve~$U$ on a torus~$T$, corresponding to
inscribed right isosceles triangles.  He then uses a parity
argument to conclude that~$U$ is not null homotopic, and a
separate argument to conclude that when moving along~$U$ the fourth
vertex cannot stay on the same side of the polygon.  Our approach has
several advantages due to the fact that we can make them generic and
thus avoid squares which have to be double counted.  Also, we use
a straightforward ad hoc argument with the minimal inscribed triangle,
different from that by Jerrard.  Overall, most details are still
different due to the different nature of intersections of analytic
and piecewise linear curves.

\subsection{} \.  In recent years, further results on the square
peg problem have appeared, most notably~\cite{Stormquist} and~\cite{Gri},
which both weakened the restrictions on the curves and extended the
reach of the theorem (to certain space quadrilaterals in~\cite{Stormquist}
and to rectangles in~\cite{Gri}).  In fact, there is a long history of
variations on the problem, which goes back to~\cite{Kakeya}.  Let us
mention some of them.

First, there are several results on inscribed triangles and rhombi and
rectangles in general Jordan curves~\cite{Nie-rhombi, NW}.  We refer
to~\cite{Nie-webpage} for the survey and further references.  Second, there
are several results on cyclic quadrilaterals inscribed into sufficiently smooth
curves~\cite{Makeev-cyclic,Makeev-two,Makeev}.  Note that in the piecewise linear
case, unless a quadrilateral~$Q \ssu \rr^2$ is an isosceles trapezoid, one can
always take a sufficiently slim triangle~$X$, such that no polygon similar to~$Q$
is inscribed into~$X$.  The corresponding ``isosceles trapezoid peg problem''
is open for general piecewise linear curves.  We believe that our proof by
deformation might be amenable to prove this result, but not without a
major change.

In a different direction, an interesting ``table theorem'' in~\cite{Fenn}
says that every sufficiently nice function~$f$ on a convex set~$U \ssu \rr^2$
has an an inscribed square of given size, defined as four points in~$U$
which are vertices of a square and have equal value of~$f$.  If the graph of~$f$
is viewed as a two-dimensional hill, the inscribed square can be interpreted as
feet of a square table, thus the name.  Note that when the curve~$C$ (in
the square peg problem) is a starred region, applying the table theorem to
the cone over~$C$ gives the desired square inscribed into~$C$.  We refer
to~\cite{KK,Mey,Mey-balancing,Mey-table} for more on the table theorem and
other related results.

Finally, there is a large number of results extending the square peg problem
to higher dimensions, including curves (see e.g.~\cite{Wu})
and surfaces (see e.g.~\cite{HLM,Kramer}).  These results are too numerous
to be listed here.  We refer to surveys~\cite[Section~B2]{CFG} 
and~\cite[Problem~11]{KW} for further references.

\subsection{} \.  In conclusion, let us mention that although stated differently,
the results for many classes of curves are essentially equivalent.  We already
saw this phenomenon in both proofs, where we applied what we called the
\emph{limit argument}.  In each case, we obtained one polygon as the limit
of others and noted that the sizes of inscribed squares do not converge to zero.
Of course, this approach fails in general, e.g.~a rectifiable curve can be obtained
as the limit of piecewise linear curves, but a priori the inscribes squares can
collapse into a point.

Nonetheless, one can use the limit argument in may cases that appear in the
literature.  It is easy to derive the square peg problem for analytic curves
from that of piecewise linear curves.  Similarly, the piecewise linear curves
can be obtained a a limit of analytic curves and derive our main theorem
from Jerrard's paper.  It would be interesting to see how far the limit
arguments take use from the piecewise linear curves.

\vskip.6cm

\noindent
{\bf Acknowledgements.} \  We are very grateful to Raman Sanyal
for listening to the first several versions of these proofs,
and to Ezra Miller for the encouragement.  Special thanks to
Elizabeth Denne for the interesting discussions and several
helpful references.  \ts The author was partially supported
by the NSF grant.

\newpage


\end{document}